\newcommand{\tad}{\, t_{a,d}}
\newcommand{\R}{\mathbb{R}}
\newcommand{\C}{\mathbb{C}}
\newcommand{\Q}{\mathbb{Q}}
\newcommand{\Z}{\mathbb{Z}}
\newcommand{\wT}{\widetilde{T}}
\newcommand{\tE}{\widetilde{E}}
\newcommand{\al}{\alpha}
\newcommand{\ga}{\gamma}
\newcommand{\de}{\delta}
\renewcommand{\b}{\mathfrak{b}}
\renewcommand{\le}{\leqslant}
\renewcommand{\ge}{\geqslant}
\newcommand{\sgn}{\mathrm{sgn }}
\newcommand{\adj}{\mathrm{Adj}}
\newcommand{\ev}{\mathrm{ev}}
\newcommand{\cI}{\mathcal{I}}
\newcommand{\cJ}{\mathcal{J}}
\newcommand{\TT}{\mathcal{T}}
\renewcommand{\SS}{\mathcal{S}}
\newcommand{\UU}{\mathcal{U}}
\newcommand{\VV}{\mathcal{V}}
\newcommand{\XX}{\mathcal{X}}
\newcommand{\NN}{\mathcal{N}}
\renewcommand{\adj}{\mathrm{adj}}
\newcommand{\comment}[1]{}
\theoremstyle{plain}
\newtheorem{theorem}{Theorem}
\newtheorem{corollary}[theorem]{Corollary}
\newtheorem{prop}[theorem]{Proposition}
\newtheorem{lemma}[theorem]{Lemma}
\numberwithin{equation}{section}
\theoremstyle{definition}
\title{An algebraic property of Hecke operators and two indefinite
theta series}
\author{Vicen\c{t}iu Pa\c{s}ol, Alexandru A. Popa}
\address{Institute of Mathematics ``Simion Stoilow" of the Romanian Academy,
P.O. Box 1-764, RO-014700 Bucharest, Romania}
\address{E-mail: vicentiu.pasol@imar.ro}
\address{E-mail: alexandru.popa@imar.ro}
\subjclass{11F25, 11F67, 11F27}
\begin{document}

\begin{abstract}We prove an algebraic property of the elements defining Hecke operators on period polynomials
associated with modular forms, which implies that the pairing on period polynomials corresponding to the Petersson
scalar product of modular forms is Hecke equivariant. As a consequence of this proof, we derive two indefinite
theta series identities which can be seen as analogues of Jacobi's formula for the theta series associated with the
sum of four squares.
%\keywords{Modular forms \and Periods of modular forms\and Cohomology of modular group}
\end{abstract}
\maketitle

\section{Introduction}

The action of Hecke operators on period polynomials of cusp forms for the full modular group has been
defined algebraically by Choie and Zagier \cite{CZ, Z90,Za93}. To describe the elements acting as
Hecke operators on period polynomials, let $M_n$ be the set of integer matrices of determinant $n$, modulo $\pm I$,
and
let $R_n=\Q[M_n]$. Thus $\Gamma_1=\mathrm{PSL}_2(\Z)$ acts on $R_n$ by left and right multiplication. Let
$$M_n^\infty=\big\{\left(\begin{smallmatrix} a & b \\
0 & d \end{smallmatrix}\right):n=ad, 0\le b<d\big\}$$ be the usual system of representatives for
$\Gamma_1\backslash M_n$, and $T_n^\infty=\sum_{M\in M_n^\infty} M \in R_n$. Then there exist  $\wT_n, Y_n\in R_n$
such that
\begin{equation}\label{hecke}
T_n^{\infty}(1-S)=(1-S)\wT_n+(1-T)Y_n 
\end{equation}
where $S=\left(\begin{smallmatrix} 0 & -1 \\
1 & 0 \end{smallmatrix}\right)$ and $T=\left(\begin{smallmatrix} 1 & 1 \\
0 & 1 \end{smallmatrix}\right)$ are the standard generators of $\Gamma_1$.
Any such element $\wT_n$ gives the action of Hecke operators on period polynomials $P_f$ of cusp forms $f$ of
weight $k$ for the full modular group \cite{Z90}, namely  
$$P_{f|_k T_n} =P_f|_{2-k} \wT_n ,$$   
where $P_f(X)=\int_0^{i \infty} f(z)(z-X)^{k-2} dz$, and the Hecke operator $T_n$ acts on modular forms by $f|_k
T_n=n^{k-1} f|_{k} T_n^\infty$.  The slash operator  is defined for  $\gamma\in \mathrm{GL}_2(\R)$, 
and $w \in \Z$ by $f|_{w} \gamma (z)=f(\gamma
z) j(\gamma,z)^{-w}$,  with $j(\gamma,z)=cz+d$ if $\gamma$ has second row $(c,d)$.
The action is extended to elements of $R_n$ by linearity. 

It was shown later by Diamantis that the elements $\wT_n$ also give an
action of Hecke operators on (multiple) period polynomials attached to cusp forms for the congruence subgroup
$\Gamma_0(N)$, when $n$ and $N$ are coprime \cite{Di01}. In a recent paper \cite{PP}, we show that the same
elements have actions on period polynomials of arbitrary modular forms for finite index subgroups $\Gamma$ of
$\Gamma_1$, corresponding to actions of a wide class of double coset operators $\Sigma_n\subset M_n$ on modular
forms. Remarkably the elements $\wT_n$ are universal, not depending on $\Gamma$ or the double coset
$\Sigma_n$. 

Elements $\wT_n$ satisfying condition \eqref{hecke} go back to work of Manin \cite{M73}. The element $\wT_n$ is
unique, up to addition of any element in the right $\Gamma_1$-module 
\begin{equation*}
 \cI=(1+S)R_n+(1+U+U^2)R_n,
\end{equation*}
where $U=TS$ has order 3. Examples of such $\wT_n$ are given in \cite{CZ,Z90}. Another
example will be given in an upcoming article of the second author and Don Zagier, leading to a simple proof
of the Eichler-Selberg trace formula for modular forms on $\Gamma_1(N)$ with Nebentypus, which generalizes the
approach sketched in \cite{Za93} for $\Gamma_1$.            

In the context of modular symbols, Merel gives many examples of elements
$\widetilde{\mathfrak{T}}_n\in R_n$ expressing the action of Hecke operators on modular symbols, which satisfy a
condition similar to \eqref{hecke} \cite{Me}. It can be shown that their adjoints
$\wT_n=\widetilde{\mathfrak{T}}_n^\vee$ satisfy \eqref{hecke}, where for $g\in M_n$ we denote by $g^\vee=g^{-1}\det
g $ the adjoint of $g$, and we apply this notation to all elements of $R_n$ by linearity.

The space of period polynomials is endowed with a pairing corresponding to the Petersson scalar product of modular
forms via a generalization of a formula of Haberland. We show in \cite{PP} that the Hecke equivariance of this
pairing under the action of $\wT_n$ is implied by the following algebraic property of the elements $\wT_n$. 
\begin{theorem}\label{thm_hecke}
For any element $\wT_n\in R_n$ satisfying property \eqref{hecke} we have
\begin{equation}\label{eq_hecke}
 \wT_n(T-T^{-1})+(T^{-1}-T)\wT_n^\vee \in \cI+\cI^\vee.
\end{equation}
\end{theorem}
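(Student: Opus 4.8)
The plan is to work entirely in $R_n$ with the anti-involution $g\mapsto g^\vee$, recording first that it fixes $1\pm S$ and $1+U+U^2$ while sending $T\mapsto T^{-1}$ and $U\mapsto U^2$. Consequently $\cI^\vee=R_n(1+S)+R_n(1+U+U^2)$ is a left $\Gamma_1$-module and $\cI+\cI^\vee$ is stable under $\vee$. The left-hand side $\al:=\wT_n(T-T^{-1})+(T^{-1}-T)\wT_n^\vee$ of \eqref{eq_hecke} is then visibly self-adjoint, and it is well defined modulo $\cI+\cI^\vee$ independently of the choice of $\wT_n$: replacing $\wT_n$ by $\wT_n+\iota$ with $\iota\in\cI$ changes $\al$ by $\iota(T-T^{-1})+(T^{-1}-T)\iota^\vee$, which lies in $\cI+\cI^\vee$ since $\cI$ (resp. $\cI^\vee$) is a right (resp. left) $\Gamma_1$-module. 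Hence it suffices to argue from \eqref{hecke} together with its adjoint
\[(1-S)(T_n^\infty)^\vee=\wT_n^\vee(1-S)+Y_n^\vee(1-T^{-1}).\]

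First I would replace $\al$ by the self-adjoint element $\beta:=(1-S)\wT_n(T-T^{-1})+(T^{-1}-T)\wT_n^\vee(1-S)$. Because $(1+S)\wT_n\in\cI$ and $\wT_n^\vee(1+S)\in\cI^\vee$, and these modules absorb multiplication by $T^{\pm1}$ on the appropriate side, one checks $\beta\equiv 2\al\pmod{\cI+\cI^\vee}$, so it is enough to show $\beta\in\cI+\cI^\vee$. Substituting \eqref{hecke} and its adjoint expresses $\beta$ through $T_n^\infty,(T_n^\infty)^\vee,Y_n,Y_n^\vee$. To absorb the unwanted boundary I would use the group-ring identities $T=US$, $SU^2=T^{-1}$, $T+U=U(1+S)$, $T^{-1}+U^2=(1+S)U^2$, and the resulting $(1-S)(T-T^{-1})=(1-S)(T+U^2)$: applied to $T_n^\infty(1-S)(T+U^2)=T_n^\infty(1-S)U(1+S)-T_n^\infty(1-S)(U-U^2)$, the first summand carries a factor $(1+S)$ on the far right and so lies in $\cI^\vee$, and dually on the adjoint side. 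This trades every occurrence of $T^{\pm1}$ for $U$, reducing $\beta$ modulo $\cI+\cI^\vee$ to the element $T_n^\infty(1-S)(U-U^2)$, its adjoint, and the two mutually adjoint boundary terms $(1-T)Y_n(T-T^{-1})$ and $(T^{-1}-T)Y_n^\vee(1-T^{-1})$.

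What remains is to show these residual terms lie in $\cI+\cI^\vee$, and this is the heart of the matter. Morally it is the assertion that the Hecke double coset is stable under the involution $M\mapsto M^\vee=SM^tS$, i.e. that the Hecke operator is self-adjoint; the $Y_n$-boundaries are precisely what repairs the defect in this symmetry. I would establish it by an explicit computation with the representatives $\left(\begin{smallmatrix}a&b\\0&d\end{smallmatrix}\right)$, $ad=n$, $0\le b<d$, following how left multiplication by $S$ and right multiplication by $T,T^{-1}$ move these matrices between the two sides and recombine the upper-right entries. The main obstacle is exactly this matching: $T_n^\infty$ and $(T_n^\infty)^\vee$ are sums over left, respectively right, coset representatives, hence genuinely different elements of $R_n$, so the identity holds only after the $(1-T)$-boundaries produced by $Y_n$ are incorporated. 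Controlling these boundaries amounts to evaluating certain sums over the entries $(a,b,d)$, and it is here—on organizing the surviving terms as generating functions in $q$—that the two indefinite theta-series identities should emerge.
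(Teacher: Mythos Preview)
Your preliminary reductions are sound: the passage from $\alpha$ to $\beta\equiv2\alpha$ via $(1+S)\wT_n\in\cI$, and the substitution of \eqref{hecke} and its adjoint, parallel the paper's Corollary~\ref{cor5.3}. The paper, however, proceeds through a sharper device: Proposition~\ref{lem1} shows that $A\in\cI+\cI^\vee$ if and only if $(1-S)A(1-S)\in\cJ+\cJ^\vee$ with $\cJ=(1-T)R_n(1-S)$. Working modulo $\cJ+\cJ^\vee$ has the decisive advantage that the boundary terms $(1-T)Y_n(\cdots)(1-S)$ land in $\cJ$ \emph{immediately}, whereas in your setup $(1-T)Y_n(T-T^{-1})$ is not visibly in $\cI+\cI^\vee$ and you are left carrying it into the ``hard part'' without saying how it interacts with the $T_n^\infty$ terms.

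The genuine gap is your final paragraph. You reduce correctly to an element equivalent to the paper's $T_n^\infty ST^{-1}(1-S)+(1-S)TS(T_n^\infty)^\vee$ and then propose to finish ``by an explicit computation with the representatives\ldots following how left multiplication by $S$ and right multiplication by $T,T^{-1}$ move these matrices.'' This is exactly where all the content lies, and a direct matching of upper-triangular coset representatives does not close the identity: $T_n^\infty$ and $(T_n^\infty)^\vee$ index left and right cosets respectively, and no rearrangement of $T$-powers on one side against the other cancels the discrepancy. The paper's argument (Propositions~\ref{prop3} and~\ref{prop_alg}) instead introduces the eight-term symmetrization $m\mapsto m^\star$ of \eqref{star}, which satisfies a four-term relation \eqref{P1}; it then constructs a set $\XX_n$ of matrices on which $\sum_{\gamma\in\XX_n}\gamma^\star=0$ holds \emph{exactly} by this relation, and shows via a careful analysis of orbits under $m\mapsto T^k m'$ that the same sum is congruent modulo $\cJ+\cJ^\vee$ to $-6U_n$, where $U_n$ is the target. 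The indefinite theta identities are a byproduct of counting $\XX_n$ in two ways, not a tool for organizing the proof; your closing suggestion that generating-function bookkeeping would finish the argument understates what is actually required.
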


The rest of the paper is devoted to an algebraic proof of the theorem, along the way obtaining other
properties of the elements defining Hecke operators, which may be of independent interest. An unexpected outcome of
this algebraic approach is the discovery of the following ``indefinite theta series'' identities, which are proved
in Section \ref{ssec_main}:
\begin{equation*}%\label{theta1}
\sideset{}{'}\sum_{\substack{a,b,c,d\ge 0\\ a+b>|d-c|,\ c+d>|a-b|}}\hspace{-8pt} q^{ad+bc}=
3 \tE_2(q) -2\sum_{n>0} \sigma_{\min}(n)\, q^n+\sum_{n>0} q^{n^2},
\end{equation*}
\begin{equation*}%\label{theta2}
\sideset{}{'}\sum_{\substack{x\ge|y|, z\ge
|t|\\x>|t|,z>|y|}}\hspace{-8pt}q^{x^2+z^2-y^2-t^2}=\tE_2(q)-2\tE_2(q^2)+4\tE_2(q^4)-2\sum_{n>0}
\sigma_{\min}^\ev(n)\, q^n+\sum_{n>0} q^{n^2},
\end{equation*}
where $\sigma_{\min} (n)=\sum_{n=ad} \min(a,d)$, $\sigma_{\min}^\ev (n)=\sum_{n=ad, 2|(d-a)} \min(a,d)$ (the sums
are over positive divisors),  
 $\tE_2(q)=\sum_{n>0} \sigma_1(n) q^n$ is the weight 2 Eisenstein series without the
constant term, and the notation $\sum'$ indicates that the terms for which there is equality in the
range are summed with weight 1/2. For example, when $p$ is an odd prime, we obtain
that $p-3$ is the number of integer solutions of 
$$x^2+z^2-y^2-t^2=p, \text{ with } x,z>|y|,|t|.$$  
This can be seen as an indefinite analogue of Jacobi's result on the number of ways of
representing a positive integer as a sum of four squares. It would be interesting to fit these
examples into a theory of theta series for indefinite quadratic forms, of the type developed by
Zwegers \cite{Zw} for forms of signature $(r,1)$.

The proof of Theorem \ref{thm_hecke} is organized as follows. In \S\ref{ssec_prelim} we derive a characterization
of $\cI+\cI^\vee$ that
allows us to reduce \eqref{eq_hecke} to a relation involving only $T_n^\infty$. This relation is then shown to be
equivalent to two relations possessing an extra symmetry besides invariance under taking adjoint, which are
proved in \S\ref{ssec_main}. The main part of the proof is contained in \S\ref{ssec_main}, where we also derive
the indefinite theta series identities as immediate corollaries. 

\section{Preliminary reductions} \label{ssec_prelim}
First we prove a characterization of the set $\cI+\cI^\vee$, based on a similar
characterization of $\cI$ in \cite{CZ}. 
\begin{prop}\label{lem1}
Let $A\in R_n=\Q[M_n]$. Then $A\in \cI+\cI^\vee$ if and only if 
$$(1-S)  A  (1-S)\in (1-T)  R_n   (1-S)+ (1-S)  R_n  
(1-T^{-1}).$$ 
\end{prop}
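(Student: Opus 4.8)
My plan is to split $R_n$ by the commuting idempotents $\tfrac12(1\pm S)$ acting on the left and on the right, and to reduce the statement to its ``doubly $S$-anti-invariant'' block, where it becomes the two-sided symmetrization of the characterization of $\cI$ from \cite{CZ}. I first record the adjoint bookkeeping: from $S^\vee=S$, $T^\vee=T^{-1}$ and $(1+U+U^2)^\vee=1+U+U^2$ one gets $\cI^\vee=R_n(1+S)+R_n(1+U+U^2)$, and the target set $(1-T)R_n(1-S)+(1-S)R_n(1-T^{-1})$ is self-adjoint. Put $\pi=\tfrac12(1-S)$ (an idempotent, since $(1-S)^2=2(1-S)$), and let $P(X)=\pi X\pi$ be the projection onto the block $\pi R_n\pi$. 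Both sides of the asserted equivalence depend only on $P(A)$: the right-hand side because $(1-S)A(1-S)=4P(A)$, and the left-hand side because $\ker P=(1-\pi)R_n+R_n(1-\pi)\subseteq(1+S)R_n+R_n(1+S)\subseteq\cI+\cI^\vee$, whence $A\in\cI+\cI^\vee\iff P(A)\in\cI+\cI^\vee$.

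Since $\ker P\subseteq\cI+\cI^\vee$, one has $(\cI+\cI^\vee)\cap\operatorname{im}P=P(\cI+\cI^\vee)$, and because $(1+S)\pi=0$ and $\pi(1+S)=0$ this image is
\[
\mathcal A:=\pi(1+U+U^2)R_n\pi+\pi R_n(1+U+U^2)\pi .
\]
Thus $A\in\cI+\cI^\vee$ is equivalent to $P(A)\in\mathcal A$, and the whole proposition reduces to matching $\mathcal A$ against the target set through $P$.

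The inclusion of $\mathcal A$ into the target set, which yields the ``only if'' direction, rests on the group-ring identity $(1-S)(1+U+U^2)=(1-T)w$ for some $w\in\Q[\Gamma_1]$: then $\pi(1+U+U^2)R_n\pi=\tfrac14(1-T)wR_n(1-S)\subseteq(1-T)R_n(1-S)$, and applying $\vee$ (which gives $(1+U+U^2)(1-S)=w^\vee(1-T^{-1})$) handles the second summand, so $\mathcal A\subseteq(1-T)R_n(1-S)+(1-S)R_n(1-T^{-1})$. I would prove this identity by checking that $(1-S)(1+U+U^2)$ has vanishing coefficient sum on every right $\langle T\rangle$-coset, using $U=TS$, $U^2=ST^{-1}$ and $STS=T^{-1}ST^{-1}$.

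For the converse, suppose $(1-S)A(1-S)$ lies in the target set. Then $P(A)=\tfrac14(1-S)A(1-S)$ lies both in $\operatorname{im}P$ and in the target set, and a trimming step—apply $P$ to a representation $(1-T)X(1-S)+(1-S)Y(1-T^{-1})$ and use $(1-S)\pi=(1-S)$, $\pi(1-S)=(1-S)$—shows $P(A)\in\mathcal B:=\pi(1-T)R_n\pi+\pi R_n(1-T^{-1})\pi$. Everything therefore comes down to the inclusion $\mathcal B\subseteq\mathcal A$, which is the main obstacle: restricted to the doubly $S$-anti-invariant block, the relations coming from $1-T$ must already be consequences of those coming from $1+U+U^2$. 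This is genuinely global information—it cannot follow from identities in $\Q[\Gamma_1]$ alone, since $1-U$ lies in the augmentation ideal of $\langle U\rangle$ but not in $(1+U+U^2)\Q[\langle U\rangle]$—and it is exactly the substantive content of the characterization of $\cI$ in \cite{CZ}. I would obtain $\mathcal B\subseteq\mathcal A$ by invoking that characterization on each of the two two-sided factors and recombining the results through the idempotent decomposition, which then completes the chain $\mathcal A\subseteq(\text{target})\cap\operatorname{im}P\subseteq\mathcal B\subseteq\mathcal A$ and proves the equivalence.
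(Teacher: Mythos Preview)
Your idempotent framework is a clean way to organize the problem, and the reduction of the equivalence to the single inclusion $\mathcal B\subseteq\mathcal A$ is correct. The gap is in the final step: you assert that $\mathcal B\subseteq\mathcal A$ follows by ``invoking that characterization on each of the two two-sided factors and recombining,'' but this is not a proof, and a naive reading of it fails. Concretely, take a single summand $z=\pi(1-T)x\pi$. One cannot place $z$ in $\cI$ via \eqref{lemma3}, since $(1-S)z=(1-S)(1-T)x\pi$ and $(1-S)(1-T)=(1-T)-S(1-T)$ does \emph{not} lie in $(1-T)\Q[\Gamma_1]$ (its image in $\Q[\langle T\rangle\backslash\Gamma_1]$ is $-[\langle T\rangle S]+[\langle T\rangle ST]\neq0$). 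Symmetrically, $z$ is not in $\cI^\vee$ either. So neither ``factor'' of $\mathcal B$ lands in $\cI$ or $\cI^\vee$ on its own; the inclusion $\mathcal B\subseteq\mathcal A$ genuinely requires splitting each element across \emph{both} $\cI$ and $\cI^\vee$, and you have not indicated how to do this.

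The paper's proof shows exactly what is missing. Starting from $(1-S)A(1-S)=(1-T)\alpha(1-S)+(1-S)\beta(1-T^{-1})$, it first applies the \emph{adjoint} of Lemma~\ref{lem_cz} to the equation $[(1-S)A-(1-T)\alpha](1-S)=(1-S)\beta(1-T^{-1})$, which peels off the right-hand $(1-S)$ and produces a term in $R_n(1+U+U^2)\subset\cI^\vee$; only then does it apply the one-sided characterization \eqref{lemma3} to push the remainder into $\cI$. In your language, proving $\mathcal B\subseteq\mathcal A$ requires this same two-pass use of \cite{CZ}: first the refined form (Lemma~\ref{lem_cz}) on one side, then \eqref{lemma3} on the other. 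Your setup is compatible with this argument, but you have repackaged the problem rather than solved it---the substantive step is precisely the one you left as a hand-wave.
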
  
\begin{proof}
The proof is based on the characterization of $\cI$ in \cite[Lemma 3]{CZ}: 
\begin{equation}\label{lemma3}
 v\in \cI \Longleftrightarrow (1-S)v\in (1-T) R_n.
\end{equation}
We also need the following statement, which appears in the proof of Lemma 3 in
\cite{CZ}:
\begin{lemma}[\cite{CZ}]\label{lem_cz} Let $A,B\in R_n$ such that $(1-S) A= (1-T) B$. Then
there exists $C\in R_n$ such that 
\[A=(1+S)C-S B, \text{ with } SB\in (1+U+U^2)R_n.  
\] 
\end{lemma}
If $A\in \cI+\cI^\vee$, the claim of Proposition \ref{lem1} follows immediately from
\eqref{lemma3} and its adjoint. Assume therefore that $A\in R_n$ satisfies:
\[(1-S)  A  (1-S)=(1-T)  \alpha  (1-S)+ (1-S)  \beta  (1-T^{-1})
\]
By the adjoint of the relations in Lemma \ref{lem_cz} it follows that there exists $C\in
R_n$ such that 
\begin{equation}\label{eq1}
 (1-S)  A=(1-T)  \al+C(1+S)+(1-S)  \beta S, 
\end{equation} 
with $(1-S)\beta S\in R_n(1+U+U^2)$.  
Since $(1\pm S)^2=2 (1\pm S)$, multiplying \eqref{eq1} by 2 we get
$$
(1-S)\cdot 2A=(1-T)\cdot  2\al+C(1+S)  (1+S)+(1-S) (1-S)\beta S
$$
From \eqref{eq1} we can write $C(1+S)=(1-S)  \ga+ (1-T)  \de$, therefore:
$$
(1-S) [2A-\ga  (1+S)-(1-S)\beta S]\in (1-T)R_n.
$$
By \eqref{lemma3} we conclude
$$
2A-\ga  (1+S)-(1-S)\beta S \in \cI.
$$
Since $(1-S)\beta S\in R_n(1+U+U^2)$, it follows by dividing the last equation by 2 that
$A\in \cI+\cI^\vee$. 
\end{proof}
We recall from \cite[Thm. 2]{CZ} that, in addition to the
relation \eqref{hecke}, the element $T_n^\infty\in R_n$ also satisfies 
\begin{equation} \label{2.1}
 T_n^\infty (1-T) \in (1-T) R_n.
\end{equation}
\begin{corollary}\label{cor5.3}
Let $\cJ=(1-T)R_n(1-S)\subset R_n$. Then \eqref{eq_hecke} is equivalent to the statement
\begin{equation}\label{h1}
 T_n^\infty ST^{-1}(1-S)  +(1-S)TS T_n^{\infty \vee}\in \cJ+\cJ^\vee.
\end{equation}
\end{corollary}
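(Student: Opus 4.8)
The plan is to transport \eqref{eq_hecke} through Proposition~\ref{lem1}. First I would compute the adjoint of $\cJ$: since $S^\vee=S$ and $T^\vee=T^{-1}$, the anti-homomorphism property gives $\cJ^\vee=(1-S)R_n(1-T^{-1})$, so the right-hand side of the membership criterion in Proposition~\ref{lem1} is exactly $\cJ+\cJ^\vee$. Noting that $A=\wT_n(T-T^{-1})+(T^{-1}-T)\wT_n^\vee$ is self-adjoint, Proposition~\ref{lem1} turns \eqref{eq_hecke} into the equivalent statement $(1-S)A(1-S)\in\cJ+\cJ^\vee$.

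Next I would eliminate $\wT_n$ using \eqref{hecke}. Writing $(1-S)A(1-S)$ as the sum of $(1-S)\wT_n(T-T^{-1})(1-S)$ and its adjoint, I substitute \eqref{hecke} in the form $(1-S)\wT_n=T_n^\infty(1-S)-(1-T)Y_n$, together with its adjoint $\wT_n^\vee(1-S)=(1-S)T_n^{\infty\vee}-Y_n^\vee(1-T^{-1})$. The two $Y_n$-contributions fall into $(1-T)R_n(1-S)=\cJ$ and $(1-S)R_n(1-T^{-1})=\cJ^\vee$ respectively, so modulo $\cJ+\cJ^\vee$ one is left with $B+B^\vee$, where $B=T_n^\infty(1-S)(T-T^{-1})(1-S)$. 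I would then bring in \eqref{2.1}: from the factorization $T-T^{-1}=-(1-T)(1+T^{-1})$ one gets $T_n^\infty(T-T^{-1})=-T_n^\infty(1-T)(1+T^{-1})\in(1-T)R_n$, so the summand $T_n^\infty(T-T^{-1})(1-S)$ in the expansion $B=T_n^\infty(T-T^{-1})(1-S)-T_n^\infty S(T-T^{-1})(1-S)$ already lies in $\cJ$. Discarding it yields $B\equiv C-D\pmod{\cJ}$, where $C=T_n^\infty ST^{-1}(1-S)$ is precisely the first term of \eqref{h1} and $D=T_n^\infty ST(1-S)$.

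At this stage everything is of the shape (element) $+$ (its adjoint): the target \eqref{h1} is $C+C^\vee$ (its second term $(1-S)TST_n^{\infty\vee}$ being $C^\vee$), while the computation gives $(1-S)A(1-S)\equiv(C+C^\vee)-(D+D^\vee)$ modulo $\cJ+\cJ^\vee$. Hence the equivalence \eqref{eq_hecke}$\Leftrightarrow$\eqref{h1} reduces to the single symmetric assertion
\[
 T_n^\infty ST(1-S)+(1-S)T^{-1}ST_n^{\infty\vee}\in\cJ+\cJ^\vee,
\]
that is, to the legitimacy of replacing $ST^{-1}$ by $ST$ in \eqref{h1} modulo $\cJ+\cJ^\vee$.

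I expect this defect to be the main obstacle. It cannot be produced from \eqref{2.1} by formal manipulation alone, because the intervening $S$ in $T_n^\infty ST$ blocks the left $(1-T)$-reduction that \eqref{2.1} supplies. To settle it I would fall back on the explicit description $T_n^\infty=\sum_{ad=n,\,0\le b<d}\left(\begin{smallmatrix}a&b\\0&d\end{smallmatrix}\right)$, together with \eqref{2.1} and its adjoint applied to $T_n^{\infty\vee}=\sum_{ad=n,\,0\le b<d}\left(\begin{smallmatrix}d&-b\\0&a\end{smallmatrix}\right)$: one computes $T_n^\infty ST$ and $T_n^\infty STS$ as explicit sums of matrices in $M_n$, and then exhibits $D$, symmetrized with $D^\vee$, as an element of $(1-T)R_n(1-S)+(1-S)R_n(1-T^{-1})$, using that any $g(1+S)$ dies against the right factor $(1-S)$. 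Once this defect lemma is established, the chain of equivalences closes and Theorem~\ref{thm_hecke} is reduced to \eqref{h1}.
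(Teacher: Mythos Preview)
Your reduction up to $B\equiv C-D\pmod{\cJ}$ with $C=T_n^\infty ST^{-1}(1-S)$ and $D=T_n^\infty ST(1-S)$ is correct and matches the paper. The gap is in what you do next. You propose to prove the ``defect lemma'' $D+D^\vee\in\cJ+\cJ^\vee$ as an independent fact, by explicit matrix computation. But this assertion is \emph{equivalent} to \eqref{h1} itself: from the group identity $STS=T^{-1}ST^{-1}$ one checks that $-ST(1-S)=T^{-1}ST^{-1}(1-S)$ exactly, so $-D=T_n^\infty T^{-1}ST^{-1}(1-S)$, and then \eqref{2.1} gives $T_n^\infty T^{-1}\equiv T_n^\infty\pmod{(1-T)R_n}$, hence $-D\equiv C\pmod{\cJ}$. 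Thus $D+D^\vee\equiv -(C+C^\vee)\pmod{\cJ+\cJ^\vee}$, and your ``defect'' is nothing other than \eqref{h1} in disguise. Establishing it by brute force would amount to proving \eqref{h1} outright, which is the content of the rest of the paper, not of this Corollary.

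The fix is immediate once you see the relation $D\equiv -C\pmod{\cJ}$: it gives $C-D\equiv 2C\pmod{\cJ}$, so $(1-S)A(1-S)\equiv 2(C+C^\vee)\pmod{\cJ+\cJ^\vee}$, and the equivalence \eqref{eq_hecke}$\Leftrightarrow$\eqref{h1} follows at once (dividing by $2$). This is exactly the paper's argument: after discarding the first summand of $B$ via \eqref{2.1}, it rewrites $T_n^\infty S(T^{-1}-T)(1-S)=T_n^\infty(ST^{-1}+T^{-1}ST^{-1})(1-S)$ using $STS=T^{-1}ST^{-1}$, and then absorbs the extra $T^{-1}$ on the left into $\cJ$ via \eqref{2.1}. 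The one ingredient you were missing is precisely this use of the braid-type relation $(ST)^3=1$.
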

\noindent We remark that $T_n^\infty A (1-S) \pmod{\cJ}$ is
well-defined modulo multiplication by powers of $T$ on the left, for any $A\in\Gamma_1$, so we
can take in \eqref{h1} \begin{equation}\label{7.3}
T_n^\infty=\sum_{n=ad}\sum_{b\!\!\!
\pmod{d}} t_{a,d}(b), \quad \text{ where }  
t_{a,d}(b)=\begin{pmatrix} a& b \\0 & d \end{pmatrix}.
\end{equation}
\begin{proof}
By Proposition \ref{lem1} and \eqref{hecke}, 
\eqref{eq_hecke} is equivalent to the following statement:
 \begin{equation*} 
    T_n^\infty(1-S)(T-T^{-1})(1-S)+(1-S)(T^{-1}-T)(1-S){T_n^\infty}^\vee \in \cJ+\cJ^\vee.
\end{equation*}
By \eqref{2.1}, we have $T_n^\infty(1-T)(1-S)\in \cJ$, so that: 
\begin{equation*}
\begin{split}
T_n^\infty (1-S)(T-T^{-1})(1-S) & \equiv T_n^\infty S(T^{-1}-T)(1-S)\\
 & \equiv T_n^\infty (ST^{-1}+T^{-1}ST^{-1})(1-S)\\
 & \equiv 2 T_n^\infty ST^{-1}(1-S)\quad \pmod{\cJ}
\end{split} 
\end{equation*}
On the second line we used $STS=T^{-1}ST^{-1}$, and on the third the remark above.
\end{proof}

We need the following notation. Let
$\epsilon=\left(\begin{smallmatrix} -1 & 0 \\ 0 & 1 \end{smallmatrix}\right) $ and for $m\in
M_n$ denote $m'=\epsilon m\epsilon$. For any matrix $m\in M_n$ we
denote by $m^\star\in R_n$ the following linear combination of eight matrices: 
\begin{equation}\label{star}
m^{\star}=(m-m')(1-S)+(1-S)(m^\vee-m^{\prime\vee} )
\end{equation}
and extend this notation to elements of $R_n$ by linearity. Note that
$m^{\prime\vee}=m^{\vee\prime}$. 

In the next proposition we show that \eqref{h1} is equivalent with more symmetric relations, 
involving only terms of type $m^\star$. The latter will be proved in the next section,
using only the following easily checked properties:
\begin{gather}
 m^\star+(m^{\prime\vee})^\star+(SmS)^\star+(Sm^{\prime\vee}S)^\star=0\tag{P1}\label{P1}\\
 (m')^\star=-m^\star\tag{P2}\label{P2}\\
 (mS)^\star=-m^\star\tag{P3}\label{P3}\\
 (T^k m)^\star \equiv m^\star \pmod{\cJ+\cJ^\vee} \quad \text{for any integer
$k$.}\tag{P4}\label{P4}
\end{gather}

\begin{prop} \label{prop3} Let $U_n=T_n^\infty U^2(1-S)+(1-S)U T_n^{\infty
\vee}\in R_n$.  We have the following congruences $(\mathrm{mod}\ \cJ+\cJ^\vee)$:
\begin{gather} \label{5.7}
\sum_{n=ad}\Big[ \sum_{-d-\frac{a}{2} <k<d-\frac{a}{2}}
\left(\begin{smallmatrix} a+k & -k\\-d & d
\end{smallmatrix}\right)-\delta\big(\textstyle\frac{a}{2}\big)
\big(\begin{smallmatrix} d & -d\\ a/2 & a/2 \end{smallmatrix}\big)
\Big]^\star \equiv 4 U_n \\
\sum_{n=ad} \Big[2 \sum_{-\frac{a}{2}< k<\frac{d-a}{2}} \left(\begin{smallmatrix} d & -d\\-k
& a+k
\end{smallmatrix}\right)+\delta\big(\textstyle\frac{a}{2}\big)
\big(\begin{smallmatrix} d & -d\\ a/2 & a/2 \end{smallmatrix}\big)\Big]^\star \equiv 2
U_n, \label{5.81}
\end{gather}
where $\delta(x)$ is 1 if $x\in\Z$ and 0 otherwise.
\end{prop}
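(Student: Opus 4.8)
The plan is to prove the two congruences directly, by expanding $U_n$ through the explicit representative \eqref{7.3} and matching a multiple of it against the $\star$-sums modulo $\cJ+\cJ^\vee$; note that, since $U=TS$ has order $3$ and $\det U=1$, one has $U^2=ST^{-1}$, so that $U_n$ is exactly the left-hand side of \eqref{h1}. First I would record the symmetries that halve the work. From $(U^2)^\vee=U$ and $(1-S)^\vee=1-S$ the two summands of $U_n=T_n^\infty U^2(1-S)+(1-S)U\,T_n^{\infty\vee}$ are interchanged by the adjoint, so $U_n^\vee=U_n$; and using $S^\vee=S$ together with $m^{\prime\vee}=m^{\vee\prime}$ one checks from \eqref{star} that every $m^\star$ is self-adjoint, $(m^\star)^\vee=m^\star$. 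Hence both sides of \eqref{5.7} and \eqref{5.81} are self-adjoint, and each $\star$-sum takes the shape $Y+Y^\vee$ with $Y=\sum c_m\,(m-m')(1-S)$ its right-hand half. Expanding \eqref{7.3} and computing $t_{a,d}(b)\,U^2=\left(\begin{smallmatrix} b & -a-b\\ d & -d\end{smallmatrix}\right)$ presents $T_n^\infty U^2(1-S)$ explicitly as a sum over $n=ad$ and $b\bmod d$.

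Next I would pass to normal forms. By the remark preceding \eqref{7.3} and its adjoint version, modulo $\cJ$ a term $m(1-S)$ depends only on the left coset $\{T^jm:j\in\Z\}$, that is, on the bottom row of $m$ and its top row reduced modulo the bottom row; dually, modulo $\cJ^\vee$ a term $(1-S)m$ depends only on the right $T^{-1}$-coset. I would therefore expand each $m^\star$ into its eight constituent matrices, split every $m(1-S)=m-mS$ according to whether its bottom row is $(d,-d)$ or $(d,d)$ up to sign, and reduce each matrix to its normal form. After this normalization the congruences \eqref{5.7} and \eqref{5.81} become purely combinatorial: the assertion is that every normalized class is produced with the correct total multiplicity by the $k$-sum.

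The multiplicities come out of the interval lengths together with the identifications furnished by \eqref{P1}--\eqref{P3}. In \eqref{5.7} the range $-d-\tfrac{a}{2}<k<d-\tfrac{a}{2}$ has length $2d$, so for $a$ odd each residue class $\bmod d$ is hit twice by the first entry of the normalized matrix; since $\left(\begin{smallmatrix} a+k & -k\\ -d & d\end{smallmatrix}\right)$ and the $S$-translate $\left(\begin{smallmatrix} a+k & k\\ d & d\end{smallmatrix}\right)S$ fall, with equal sign, into the same $(d,-d)$-family, these two effects combine to the factor $4$. In \eqref{5.81} the range $-\tfrac{a}{2}<k<\tfrac{d-a}{2}$ has length $d/2$ and the matrices $\left(\begin{smallmatrix} d & -d\\ -k & a+k\end{smallmatrix}\right)$ carry $(d,-d)$ in the top row; here I would first apply \eqref{P3} (and pass through $\cJ^\vee$ via self-adjointness) to bring them into the same family, after which the explicit prefactor $2$ and the halved range reproduce the factor $2$.

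The main obstacle will be the parity bookkeeping encoded by the terms $\mp\delta(\tfrac{a}{2})\left(\begin{smallmatrix} d & -d\\ a/2 & a/2\end{smallmatrix}\right)$. When $a$ is even the value $k=-\tfrac{a}{2}$ gives the matrix $m=\left(\begin{smallmatrix} a/2 & a/2\\ -d & d\end{smallmatrix}\right)$, which satisfies $m'=mS$; for such a self-paired matrix the half $(m-m')(1-S)$ collapses to $2\,m(1-S)$ because $S^2=1$, so this single index contributes a doubled amount that the generic count does not anticipate. The compensating correction is precisely the $\star$ of the associated diagonal matrix $\left(\begin{smallmatrix} d & -d\\ a/2 & a/2\end{smallmatrix}\right)=Sm$, inserted with weight $\delta(\tfrac{a}{2})$ and the sign dictated by \eqref{P1}--\eqref{P3}. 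The genuine difficulty is to run the two normalizations — the left one modulo $\cJ$ and the right one modulo $\cJ^\vee$ — simultaneously through the self-adjoint structure, since the adjoint interchanges the row-families with the column-families; keeping the fixed points of the involution $m\mapsto m'S$ counted with the correct weight, and checking that all remaining eight-term contributions cancel in pairs via \eqref{P1}--\eqref{P4}, is where the care is required.
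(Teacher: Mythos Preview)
Your plan for \eqref{5.7} is essentially the paper's argument in different clothing: the paper too observes that the summand is $(t_{a,d}(-k-a)ST^{-1})^\star$, that by \eqref{P4} only $k\bmod d$ matters, and that the length-$2d$ interval therefore yields $2(T_n^\infty ST^{-1})^\star$; the second doubling then comes from $STS=T^{-1}ST^{-1}$ together with \eqref{2.1}. One point where your write-up diverges is the treatment of the $\delta(a/2)$ correction. Your explanation via the self-pairing $m'=mS$ is a correct observation about that single matrix, but it does not by itself show that the two boundary pieces $\left(\begin{smallmatrix} a/2 & a/2\\ -d & d\end{smallmatrix}\right)^\star$ and $\left(\begin{smallmatrix} d & -d\\ a/2 & a/2\end{smallmatrix}\right)^\star$ cancel in the sum. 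The paper's mechanism is different: it pairs them across the factorizations $n=ad$ and $n=da$ (swap $a\leftrightarrow d$) and then invokes \eqref{P2}. Your ``self-paired'' remark does not replace this symmetry argument.

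For \eqref{5.81} there is a genuine gap. The matrices $\left(\begin{smallmatrix} d & -d\\ -k & a+k\end{smallmatrix}\right)=S\,t_{a,d}(k)\,ST^{-1}$ carry an extra $S$ on the left compared with the \eqref{5.7} summands, and removing that $S$ is the whole difficulty. Your proposal to ``apply \eqref{P3} and pass through $\cJ^\vee$'' does not do this: \eqref{P3} acts on the \emph{right}, and the adjoint passage swaps row-families with column-families rather than stripping the left $S$. The paper's proof hinges on the exact identity
\[
S\,t_{a,d}(k)\,ST^{-1}(1-S)\;\equiv\;t_{a,d}(k)\,ST^{-1}(1-S)\;+\;(1-S)\bigl[t_{a,d}(a+k)-t_{a,d}(k)\bigr]S \pmod{\cJ^\vee},
\]
which separates the contribution into a ``main'' piece matching \eqref{5.7} and a correction living in $\cJ^\vee$-normalizable form. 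The corrections are then summed telescopically over the range $-\tfrac{a+d}{2}<k<\tfrac{d-a}{2}$ (crucially using that this range is symmetric under $k\mapsto -k-a$), leaving only explicit boundary terms $E_{a,d}$ built from $t_{a,d}\!\left(\tfrac{d-a}{2}\right)$ and $t_{a,d}\!\left(\tfrac{a-d}{2}\right)$; these are finally killed by the symmetry $E_{a,d}+E_{d,a}\equiv 0$. None of these three moves --- the identity above, the telescoping, and the $a\leftrightarrow d$ cancellation of the residual $E_{a,d}$ --- appears in your outline, and the heuristic ``halved range times prefactor $2$ gives the factor $2$'' does not account for the correction terms at all. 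Your last paragraph correctly flags that the simultaneous $\cJ$/$\cJ^\vee$ bookkeeping is where the work lies; as written, that work has not been done.
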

\noindent The LHS of \eqref{5.7} is congruent to $2 (T_n^\infty U^2)^\star$, while the LHS
of \eqref{5.81} is congruent with $(U T_n^\infty U^2)^\star$, but we will not need these facts
in the sequel. 
\begin{proof}
The summand in the LHS of \eqref{5.7} is $(t_{a,d}(-k-a)ST^{-1})^\star$ [with $\tad(b)$ defined
in
\eqref{7.3}], and we note that $k$ is
well-defined modulo $d$ by \eqref{P4}. We can rewrite the LHS as follows:
\[
\sum_{n=ad}\sum_{k\ \textrm{mod}\ d}
2(t_{a,d}(k)ST^{-1})^\star-\delta\big(\textstyle\frac{a}{2}\big)
\big[\big(\begin{smallmatrix} a/2 & a/2\\ -d & d \end{smallmatrix}\big)+
\big(\begin{smallmatrix} d & -d\\ a/2 & a/2 \end{smallmatrix}\big) \big]^\star\equiv 2
(T_n^\infty ST^{-1})^\star,
\]
because the sum of the terms in square brackets equals 
\[\sum_{n=2ad} \big[\big(\begin{smallmatrix} a & a\\ -d & d \end{smallmatrix}\big)+
\big(\begin{smallmatrix} d & -d\\ a & a \end{smallmatrix}\big) \big]^\star\equiv 0
\]
(exchanging $a$ with $d$ in the first matrix and using \eqref{P2}). 

For any $A\in R_n$, we use the abbreviation $\{A\}+\{\adj\}:= A+A^\vee$. We have
\begin{equation}\label{7.10}
\begin{split}
(T_n^\infty ST^{-1})^\star&\equiv \big\{[T_n^\infty
ST^{-1}-(T_n^{\infty})'ST](1-S)\big\}+\{\adj \} \\
&\equiv \big\{[T_n^\infty
ST^{-1}+(T_n^{\infty})STS](1-S)\big\}+\{\adj \} \\
&\equiv \big\{ 2 T_n^\infty ST^{-1}(1-S)\big\}+\{\adj \} ,
\end{split}
\end{equation}
proving \eqref{5.7}. On the second line we have used the remark following
Corollary \ref{cor5.3}, while on the third we used $(ST)^3=1$ together with \eqref{2.1}.  

The summand in \eqref{5.81} is $(S \tad(k) ST^{-1})^\star$, and we have as in \eqref{7.10}:
\[
(S \tad(k) ST^{-1})^\star=\big\{(S \tad(k) ST^{-1}+S\tad(-k-a)ST^{-1})(1-S)\big\}+\{\adj\}.
\]
The range $-\frac{a+d}{2}< k<\frac{d-a}{2}$ is invariant under $k\rightarrow -k-a$, hence the
LHS of \eqref{5.81}, which we denote $L_n$, becomes:

\begin{equation}\label{5.13}
L_n\equiv \sum_{n=ad}  \Big\{ 2
\sum_{-\frac{a+d}{2}<k<\frac{d-a}{2}} S \tad (k)ST^{-1}(1-S)\Big\}+\{\adj\}.
\end{equation}
Now we have:
\begin{equation}\label{5.14}
S\tad(k)ST^{-1}(1-S)\equiv \tad(k)ST^{-1}(1-S)+(1-S)[\tad(a+k)-\tad(k)]S . 
\end{equation}
Since $\tad(k)ST^{-1}(1-S)\pmod{\cJ}$ depends only on $k\!\! \pmod{d}$ we have:
\begin{equation}\label{5.15}
 \sum_{-\frac{a+d}{2}<k}^{k<\frac{d-a}{2}} \tad (k)U^2(1-S)\equiv
\sum_{k=0}^{d-1}\tad(k) U^2(1-S)-\delta\big(\textstyle\frac{d-a}{2}\big)\tad
\big(\frac{d-a}{2}\big)U^2(1-S).
\end{equation}

Similarly 
\[
\sum_{-\frac{a+d}{2}<k}^{k<\frac{d-a}{2}} [t_{a,d}(a+k)-t_{a,d}(k)] (S-1) \equiv
\delta\big(\textstyle\frac{d-a}{2}\big)\big[\tad \big(\frac{d-a}{2}\big) -
\tad \big(\frac{a-d}{2}\big)
\big] (S-1)
\]

Next we observe that
\begin{equation}
\begin{split}
 &\sum_{n=ad} \sum_{-\frac{a+d}{2}<k}^{k<\frac{d-a}{2}}
\{\tad(k)-\tad(a+k)\}+\{\adj\}\equiv \\ &\equiv  \sum_{n=ad}\Big[ \sum_{-\frac{a+d}{2}<k}^{k<\frac{d-a}{2}}
[\tad(k)-\tad(a+k)] +\sum_{-\frac{a+d}{2}<k}^{k<\frac{a-d}{2}}
[\tad(-k)-\tad(-d-k)]\Big] \\ &\equiv 
\sum_{n=ad} 
\delta\big(\textstyle\frac{d-a}{2}\big)
\big[\tad \big(\frac{a-d}{2}\big) -
\tad \big(\frac{d-a}{2}\big)\big].\label{5.16}
\end{split}
\end{equation}
Notice that \eqref{5.16} can be conjugated by $S$, since $S^\vee=S$. Putting together the
resulting equation, and \eqref{5.16}, \eqref{5.15}, \eqref{5.14}, \eqref{5.13}, we obtain:
\[L_n \equiv 
\{2T_n^\infty ST^{-1}(1-S)\}+\{\adj\}+2\sum_{n=ad}
\delta\big(\textstyle\frac{d-a}{2}\big) E_{a,d}
\]
where:
\begin{multline*}
E_{a,d}=S\big[\tad \big(\textstyle\frac{a-d}{2}\big) - \tad \big(\frac{d-a}{2}\big)\big]S+
\tad \big(\frac{d-a}{2}\big) - \tad \big(\frac{a-d}{2}\big)+\\
+\big\{\big[\tad \big(\textstyle\frac{d-a}{2}\big) - \tad \big(\frac{a-d}{2}\big)\big] (S-1)-
\tad \big(\frac{d-a}{2}\big)U^2(1-S) \big\}+ \{\adj\}.
\end{multline*}
It is easy to see that $E_{a,d}+E_{d,a}\equiv 0 \pmod{\cJ+\cJ^\vee}$, finishing the proof of
\eqref{5.81}.
\end{proof}

\section{Main algebraic result and two theta series identities}\label{ssec_main}
In this section, we finish the proof of \eqref{eq_hecke}. The indefinite theta series
identities from the introduction follow as a bonus from the proof, and since they are of
independent interest we keep this section self-contained. 

Recall the notation \eqref{star} and properties \eqref{P1}-\eqref{P4}, which are the only
ingredients needed.  For convenience we rewrite the four-term relation:
\begin{equation}\tag{P1}\label{P11}
 \left(\begin{smallmatrix} a & -b\\c & d \end{smallmatrix}\right)^\star+
 \left(\begin{smallmatrix} d & -b\\c & a \end{smallmatrix}\right)^\star+
 \left(\begin{smallmatrix} a & -c\\b & d \end{smallmatrix}\right)^\star +
\left(\begin{smallmatrix} d & -c\\b & a \end{smallmatrix}\right)^\star =0.
\end{equation}
We will apply this relation to matrices in the following set
\[
\SS_n=\Big\{\left(\begin{smallmatrix} a & -b\\c & d \end{smallmatrix}\right)\in M_n : c\ge
b;\quad d\ge a;\quad
a+b>d-c> 0 \Big\}.
\]
For $\gamma= \left(\begin{smallmatrix} a & -b\\c & d
\end{smallmatrix}\right)$, denote $A_\gamma=\big\{\left(\begin{smallmatrix} a & -b\\c & d
\end{smallmatrix}\right),
 \left(\begin{smallmatrix} d & -b\\c & a \end{smallmatrix}\right),
 \left(\begin{smallmatrix} a & -c\\b & d \end{smallmatrix}\right) ,
\left(\begin{smallmatrix} d & -c\\b & a \end{smallmatrix}\right) \big\}$ and define 
\begin{equation}\label{5.17}
%\begin{split}
 \TT_n:=\displaystyle\bigcup_{\gamma\in\SS_n} A_\gamma =\Big\{\left(\begin{smallmatrix} a &
-b\\c & d \end{smallmatrix}\right):\ b+c>|a-d|,\quad
\max(a,d)> \max(b,c)\Big\}
%\end{split}
\end{equation}
(the union is disjoint), so that we have by construction
\begin{equation}\label{5.18}
 \sum_{\gamma\in\TT_n}\gamma^\star=0. 
\end{equation}
It is convenient to further symmetrize the set $\TT_n$ by interchanging $a\leftrightarrow b$,
$c\leftrightarrow d$. Note that if $a,b,c,d$
satisfy $\max(a,d)\ge \max(b,c)$ then 
\[
b+c>|a-d| \Longleftrightarrow c+d>|a-b|, \quad a+b>|c-d|. 
\]
Defining therefore 
\begin{equation}
\begin{split}
 \UU_n&:=\TT_n\cup \Big\{ \left(\begin{smallmatrix} b & -a\\d & c \end{smallmatrix}\right):
\left(\begin{smallmatrix} a & -b\\c & d \end{smallmatrix}\right) \in\TT_n\Big\}\\
\XX_n&:=\Big\{\left(\begin{smallmatrix} a & -b\\c & d \end{smallmatrix}\right):\ c+d>|a-b|,
\quad a+b>|c-d|\Big\}\\
\VV_n&:=\Big\{\left(\begin{smallmatrix} a & -b\\c & d \end{smallmatrix}\right)\in \XX_n:
\max(a,d)=\max(b, c)\Big\},
\end{split}
\end{equation}
we have the disjoint union $\XX_n= \UU_n\cup\VV_n$.  We
have $\sum_{\gamma\in\UU_n}\gamma^\star=0$, by \eqref{P2}, \eqref{P3}, \eqref{5.18}, and it is easily found that 
\begin{equation}\label{5.19}
\VV_n=\bigcup_{\gamma} A_\gamma \quad \text{ over }
\gamma=\left(\begin{smallmatrix} a & -b\\c & c
\end{smallmatrix}\right) \text{ with } c|n, \ c\ge a \ge \frac{n}{c}-c
\end{equation}
so that  $\sum_{\gamma\in\VV_n}\gamma^\star=0$. Consequently
\begin{equation}\label{5.20}
 \sum_{\gamma\in\XX_n}\gamma^\star=0. 
\end{equation}
\begin{prop} \label{prop_alg} For every $n>0$ we have the congruence
$(\mathrm{mod}\ \cJ+\cJ^\vee)$:
 \begin{equation}\label{5.21}
\sum_{\gamma\in\XX_n}\gamma^\star\equiv\sum_{n=ad}\Big[\sum_{-d-\frac{a}{2}<k<d-\frac{a}{2}}
\left(\begin{smallmatrix} a+k & k\\d & d \end{smallmatrix}\right)+
2\sum_{-\frac{a}{2} <k<\frac{d-a}{2}} \left(\begin{smallmatrix} d & d\\k & a+k
\end{smallmatrix}\right)
\Big]^\star.
\end{equation}
\end{prop}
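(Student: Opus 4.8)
The plan is to rewrite the formal sum $\sum_{\gamma\in\XX_n}\gamma^\star$ as the right-hand side of \eqref{5.21}, using only \eqref{P11}, \eqref{P2}, \eqref{P3}, \eqref{P4}. I would treat the left-hand side as a formal linear combination and deliberately \emph{not} substitute the vanishing \eqref{5.20} during the reduction: it is the combination of \eqref{5.20} with the identity \eqref{5.21} and Proposition \ref{prop3} that afterwards yields $U_n\equiv 0$, i.e.\ \eqref{h1}. The workhorse is \eqref{P4}: since $(T^k m)^\star\equiv m^\star$, the class of $\gamma^\star$ in $R_n/(\cJ+\cJ^\vee)$ depends only on the bottom row of $\gamma$ and on its top row modulo that bottom row, while \eqref{P2} and \eqref{P3} let me flip signs and swap the bottom-row entries before reducing.

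First I would split $\XX_n$ according to whether the two bottom-row entries are equal. If they are equal, the bottom row is $(d,d)$ with $d\mid n$; a direct check shows that the elements of $\XX_n$ with bottom row $(d,d)$ are exactly the matrices $\left(\begin{smallmatrix} a+k & k\\ d & d\end{smallmatrix}\right)$ with $n=ad$ and $-d-\tfrac{a}{2}<k<d-\tfrac{a}{2}$, so the diagonal part reproduces the first family of \eqref{5.21} with weight $1$ and needs no reduction.

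The content lies in the off-diagonal part, where the bottom row $(c,d)$ has $c\ne d$. Fixing the bottom row, the determinant confines the top row to a line and the two inequalities $c+d>|a-b|$, $a+b>|c-d|$ defining $\XX_n$ cut out a bounded window of lattice points on it. Summing $\gamma^\star$ over this window and collapsing each class modulo the bottom row by \eqref{P4}, the contribution becomes a multiplicity times a single representative, which I would normalize using \eqref{P2} and \eqref{P3} to the constant-top-row matrix $\left(\begin{smallmatrix} d & d\\ k & a+k\end{smallmatrix}\right)$ of the second family, with $a$ the difference of the bottom-row entries and $d=n/a$. I expect this multiplicity to be $2$, explaining the weight $2$, and the surviving classes to be exactly those with $-\tfrac{a}{2}<k<\tfrac{d-a}{2}$.

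The step I expect to be the main obstacle is precisely this off-diagonal count: showing that every class occurs with the same multiplicity $2$, pinning down the half-open range of $k$, and handling the degenerate top rows for which one of the defining inequalities of $\XX_n$ holds with equality. Those boundary terms are the equal-row, $\VV_n$-type matrices of \eqref{5.19}, and I expect them to produce exactly the $\delta$-type corrections appearing in Proposition \ref{prop3}; symmetrizing in $a\leftrightarrow d$ as in the proof of that proposition should make them cancel modulo $\cJ+\cJ^\vee$. The only routine point to monitor is that each application of \eqref{P4} introduces an error genuinely in $\cJ+\cJ^\vee$, which is guaranteed by \eqref{P4} itself.
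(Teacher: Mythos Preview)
Your outline is right in spirit, and the diagonal part $c=d$ matches the paper exactly: the elements of $\XX_n$ with bottom row $(d,d)$ are precisely the matrices $\left(\begin{smallmatrix} a+k & k\\ d & d\end{smallmatrix}\right)$ with $-d-\tfrac{a}{2}<k<d-\tfrac{a}{2}$, and no reduction is needed there.

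The gap is in the off-diagonal part. Your expectation that, after collapsing by \eqref{P4}, each $T$-class over a fixed bottom row appears with uniform multiplicity $2$ is not correct: the classes $\{m\}=\Gamma_{1\infty}m\cap\XX_n^<$ (where $\XX_n^<=\{|c|<d\}$) have either one \emph{or} two elements, and a bare multiplicity count produces nothing useful. Note also that for a generic bottom row $(c,d)$ the difference $d-c$ does not divide $n$, so there is \emph{no} matrix with that bottom row and constant top row $(n/(d-c),\,n/(d-c))$; your proposed normalization cannot land there by \eqref{P2}--\eqref{P4} alone. The mechanism you are missing is the involution $\{m\}\mapsto\{m'\}$ on $T$-classes: by \eqref{P2} and \eqref{P4} the starred sums over $\{m\}$ and $\{m'\}$ are negatives of each other, so a pair cancels unless $|\{m\}|\ne|\{m'\}|$. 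A short case analysis (according to whether $d-c\mid a+b$ and whether $a=b$) shows that the only surviving classes are those containing a matrix with equal top-row entries $\left(\begin{smallmatrix} a & a\\ -c & d\end{smallmatrix}\right)$; the factor $2$ in \eqref{5.21} then comes from the separate symmetry $\XX_n^<\leftrightarrow\XX_n^>$ via $m\mapsto m'S$, not from a per-class multiplicity. There is also one more step your outline does not anticipate: the surviving range initially carries the extra restriction $d-c\nmid 2a$, and a second pairing argument is needed to show that the complementary sum (with the divisibility) vanishes modulo $\cJ+\cJ^\vee$.
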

\noindent  By Proposition $\ref{prop3}$ and \eqref{P2}, we obtain  
$$0=\sum_{\gamma\in\XX_n}\gamma^\star\equiv-6\left[T_n^\infty U^2(1-S)+(1-S)U T_n^{\infty
\vee}\right] \pmod{\cJ+\cJ^\vee},$$ 
which, together with Corollary \ref{cor5.3}, finishes the proof of Theorem \ref{thm_hecke}. 

Before proving the proposition, we show that it implies the first theta series identity
in the introduction. For a set of 2 by 2 matrices $A$ denote by
$\NN(A)$ the number of matrices
$\left(\begin{smallmatrix} * & *\\c & d \end{smallmatrix}\right)\in A$ with $cd>0$, minus
the number of matrices with $cd<0$, namely:
\[
\NN(A)=\sum_{\gamma\in A} \sgn(c_\gamma d_\gamma).
\] 
\begin{corollary}\label{cor5.6} We have the identity:
\begin{equation}\label{5.22}
\sideset{}{'}\sum_{\substack{a,b,c,d\ge 0\\ a+b>|d-c|,\ c+d>|a-b|}}q^{ad+bc}=
3 \tE_2(q) -2\sum_{n>0} \sigma_{\min}(n)\, q^n+\sum_{n>0} q^{n^2},
\end{equation}
where $\sigma_{\min} (n)=\sum_{n=ad}
\min(a,d)$.
\end{corollary}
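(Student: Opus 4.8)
The plan is to feed the signed‑counting functional $\NN$ into Proposition \ref{prop_alg}, applied to individual matrices rather than to the symmetric combinations $\gamma^\star$. The structural fact I would isolate first is that $\NN$ kills $(1-T)R_n$: because $T=\left(\begin{smallmatrix}1&1\\0&1\end{smallmatrix}\right)$ is upper unitriangular, left multiplication by $T$ fixes the bottom row of any matrix, so $\NN\big((1-T)h\big)=0$ for all $h\in R_n$, and hence $\NN$ vanishes on $\cJ=(1-T)R_n(1-S)\subset(1-T)R_n$. By contrast $\NN$ does \emph{not} vanish on $\cJ^\vee$, so it cannot be applied to \eqref{5.21} verbatim. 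The remedy is to use the splitting $\gamma^\star=(\gamma-\gamma')(1-S)+\big[(\gamma-\gamma')(1-S)\big]^\vee$ and to work only with the non-adjoint half $(\gamma-\gamma')(1-S)$, for which the manipulations behind Proposition \ref{prop_alg} give an identity modulo $\cJ$ alone; taking adjoints recovers the $\{\adj\}$ half modulo $\cJ^\vee$, and $\NN$ sees exactly the former.

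A one-line expansion evaluates $\NN$ on the basic blocks. For $\gamma=\left(\begin{smallmatrix}a&-b\\c&d\end{smallmatrix}\right)$, writing $(\gamma-\gamma')(1-S)$ as its four constituent matrices gives $\NN\big((\gamma-\gamma')(1-S)\big)=4\,\sgn(c d)=4\,\NN(\gamma)$. Applied to the left side of \eqref{5.21} this returns $4\,\NN(\XX_n)=4\sum_{\gamma\in\XX_n}\sgn(c_\gamma d_\gamma)$, so after cancelling the factor $4$ everything comes down to identifying $\NN(\XX_n)$ with the $q^n$-coefficient of the theta series in \eqref{5.22}. Since the inequalities cutting out $\XX_n$ force $c+d>0$ and $a+b>0$, a matrix with $c_\gamma d_\gamma>0$ has $c,d>0$; I would exhibit a sign-reversing involution matching the matrices of $\XX_n$ with $c_\gamma d_\gamma<0$ against those with $c,d>0$ and a sign change in the top row, collapsing the signed sum to the plain count of quadruples $a,b,c,d\ge0$ in the stated range (the equality locus weighted $1/2$). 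This passage from the signed count to the positive-quadruple generating function is the delicate combinatorial heart of the statement.

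On the right side of \eqref{5.21} the same functional is applied to $\mu_k=\left(\begin{smallmatrix}a+k&k\\d&d\end{smallmatrix}\right)$ and $\nu_k=\left(\begin{smallmatrix}d&d\\k&a+k\end{smallmatrix}\right)$, where $\NN(\mu_k)=\sgn(d^2)=1$ and $\NN(\nu_k)=\sgn\big(k(a+k)\big)$. Thus the right side reduces (again after the factor $4$) to $\sum_{n=ad}\big[\,\#\{k:-d-\tfrac a2<k<d-\tfrac a2\}+2\!\!\sum_{-\frac a2<k<\frac{d-a}2}\!\!\sgn(k(a+k))\,\big]$. The first inner count is $2d$ or $2d-1$ according to the parity of $a$, and $\sum_{d\mid n}2d=2\sigma_1(n)$ gives the bulk; the parity corrections, together with the signed sums $\sum\sgn(k(a+k))=\sum\sgn(k)$ (valid since $a+k>0$ throughout the range), then supply the remaining $\sigma_1(n)$, the term $-2\sigma_{\min}(n)$, and — from the diagonal factorization $a=d$, which occurs exactly when $n$ is a perfect square — the contribution $\sum_{n>0}q^{n^2}$. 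Reassembling produces precisely $3\,\tE_2(q)-2\sum_{n>0}\sigma_{\min}(n)\,q^n+\sum_{n>0}q^{n^2}$.

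I expect two steps to carry the real difficulty. The first is licensing the passage of $\NN$ through the congruence: one must check that the non-adjoint halves obey the identity modulo $\cJ$, not merely the symmetrized statement modulo $\cJ+\cJ^\vee$ that $\NN$ is blind to; this should come from replaying the relations $STS=T^{-1}ST^{-1}$, $(ST)^3=1$ and $T_n^\infty(1-T)\in(1-T)R_n$ of Proposition \ref{prop3} separately on the $\{A\}$ and $\{\adj\}$ parts. The second, and more error-prone, is the exact boundary accounting — the integrality of $\tfrac a2$ and $\tfrac{d-a}2$, the half-weights on the equality locus, and the perfect-square terms — which must combine to yield exactly the constants $3$, $-2$ and $+1$. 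As a safeguard I would pin the boundary conventions down against the first coefficients, namely the values $2$ and $5$ at $n=1,2$, before committing to the general divisor-sum computation.
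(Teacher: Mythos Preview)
Your strategy is the same as the paper's: compute $\NN(\XX_n)$ in two ways, once as the weighted count of nonnegative quadruples and once from the right side of \eqref{5.21}. Two points deserve comment.

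First, your main worry --- that $\NN$ does not kill $\cJ^\vee$ and so cannot be applied to \eqref{5.21} directly --- is real, but your remedy is more elaborate than necessary. The paper does not split $\gamma^\star$ into adjoint halves; it simply observes that the proof of Proposition~\ref{prop_alg} transforms the \emph{index set} $\XX_n$ into the right-hand index set using only the moves \eqref{P2}, \eqref{P3}, \eqref{P4} on individual indices $\gamma$ (not on $\gamma^\star$), and each of these moves visibly preserves $\sgn(c_\gamma d_\gamma)$: conjugation by $\epsilon$ flips the sign of $c$, right multiplication by $S$ swaps $(c,d)\mapsto(d,-c)$, and left multiplication by $T$ fixes the bottom row; in \eqref{P2} and \eqref{P3} the accompanying sign flip of the coefficient compensates. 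So $\NN$, viewed as a functional on signed multisets of indices, is invariant throughout, and no passage through $\cJ$ or $\cJ^\vee$ is ever needed. Your concern about ``replaying the relations $STS=T^{-1}ST^{-1}$, $(ST)^3=1$'' is also misplaced: those are used in Proposition~\ref{prop3}, not in Proposition~\ref{prop_alg}, whose proof invokes only \eqref{P2}--\eqref{P4}.

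Second, the step you flag as the ``delicate combinatorial heart'' --- identifying $\NN(\XX_n)$ with the half-weighted count of nonnegative quadruples --- is left in your proposal as an unspecified sign-reversing involution. The paper carries this out concretely by decomposing $\TT_n$ (and then $\UU_n$, $\VV_n$) into the four-element packets $A_\gamma$ indexed by $\gamma\in\SS_n$ and checking case by case that $\NN(A_\gamma)$ equals the number of members of $A_\gamma$ with all entries nonnegative (boundary entries counted with weight $1/2$). This packet argument is what actually does the work; your ``sign change in the top row'' does not obviously define an involution on $\XX_n$, and you would need to replace it by something like the $A_\gamma$ grouping to close the gap. Your treatment of the right-hand side is correct and matches the paper's count $\NN(\XX_n)=[2\sigma(n)-\tau_{\ev}(n)]+[\sigma(n)+\tau_{\ev}(n)-2\sigma_{\min}(n)+\delta(\sqrt{n})]$.
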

\begin{proof}
We claim that $\NN(\XX_n)$ equals the coefficient of $q^n$ in the LHS of \eqref{5.22}; on the
other hand, going from \eqref{5.20} to \eqref{5.21} we use only relations
\eqref{P2}-\eqref{P4}, which do not change the function $\NN$, so $\NN(\XX_n)$ can be computed
exactly from the RHS of \eqref{5.21}. Indeed counting matrices in the RHS of \eqref{5.21} yields 
\[
\NN(\XX_n)=[2\sigma(n)-\tau_{\ev}(n)]+[\sigma(n)+\tau_\ev(n)-2\sigma_{\min}(n)+\delta(\sqrt{n})]
\]
where the first term counts matrices in the first sum, and the second counts matrices in the second sum
(with sign depending on the sign of $k$), and
$\tau_\ev(n)$ denotes the number of even divisors of $n$. The result agrees with the coefficient of $q^n$ in the
RHS of \eqref{5.22}, and the proof is finished once we prove the claim above.

To count $\NN(\XX_n)$, we start with $\TT_n$. Let $\gamma=\left(\begin{smallmatrix} a & -b\\c &
d \end{smallmatrix}\right)\in \SS_n$. Clearly $c,d>0$ and we have three cases:
\begin{enumerate}
 \item $a>0>b$ or $b>0>a$. Then $\NN(A_\gamma)=0$; 
 \item $a,b>0$. Then $\NN(A_\gamma)=|A_\gamma|$;
 \item One of $a$, $b$ is 0. Then the other is positive and
$\NN(A_\gamma)=\frac{1}{2}|A_\gamma|$.
\end{enumerate}
By \eqref{5.17} we conclude that $\NN(\TT_n)=\#\big\{\left(\begin{smallmatrix} a & -b\\c & d
\end{smallmatrix}\right)\in \TT_n: a,b,c,d\ge 0\big\}$, each matrix with $abcd=0$ being
counted with weight 1/2. The same is obviously true for $\UU_n$, and by inspection for
$\VV_n$ (see \eqref{5.19}). We conclude that 
\begin{equation}\label{5.222}
\NN(\XX_n)=\#\Big\{\left(\begin{smallmatrix} a & -b\\c & d
\end{smallmatrix}\right)\in \XX_n: a,b,c,d\ge 0; \begin{array}{c}
\text{ matrices with $abcd=0$ are }\\
\text{ counted with weight 1/2 } \end{array}
 \Big\},
\end{equation}
that is $\NN(\XX_n)$ is the coefficient of $q^n$ in the LHS of \eqref{5.22}, as claimed.  
\end{proof}

\begin{proof}[Proof of Proposition \ref{prop_alg}.]
We decompose $\XX_n$ as a disjoint union:
\[
 \XX_n=\XX_n^<\cup\XX_n^=\cup\XX_n^>
\]
where $\XX_n^<$, $\XX_n^=$, and $\XX_n^>$ consist of those
$\gamma=\left(\begin{smallmatrix} a & -b\\c & d
\end{smallmatrix}\right)\in\XX_n$ with $|c|<d$, $c=d$, and $c>|d|$ respectively. Using
\eqref{P2}-\eqref{P4}, we will show that $\sum_{\gamma \in \XX_n} \gamma^\star$ reduces to
\eqref{5.21}. 

If $\left(\begin{smallmatrix} x & k \\d & d \end{smallmatrix}\right)\in \XX_n^=$, we have
$n=ad$, $x=a+k$ and $2d>|a+2k|$, hence
\begin{equation}\label{5.221}
 \sum_{\gamma\in\XX_n^=}\gamma^\star=\sum_{n=ad} \sum_{-d-\frac{a}{2}<k<d-\frac{a}{2}}
\left(\begin{smallmatrix} a+k & k\\d & d \end{smallmatrix}\right)^\star.
\end{equation}

Since $m\rightarrow m'S$ takes $\XX_n^<$ bijectively onto $\XX_n^>$, we have by \eqref{P2},
\eqref{P3} 
\begin{equation}\label{5.23}
\sum_{\gamma\in \XX_n^<} \gamma^\star=\sum_{\gamma\in \XX_n^>} \gamma^\star.
\end{equation}
It remains to calculate $\sum_{\gamma\in \XX_n^<} \gamma^\star$. 

For $m\in M_n$, denote by $\{m\}$ the equivalence class of $m$ in $\XX_n^<$ modulo
multiplication by powers of $T$ on the left, namely $\{m\}:=\Gamma_{1\infty} m \cap
\XX_n^<$ where $\Gamma_{1\infty}= \{T^k: k\in \Z\}$. Consider the involution 
\[
f:\Gamma_{1\infty}\backslash \XX_n^<\rightarrow \Gamma_{1\infty}\backslash \XX_n^<, \quad
f(\{m\})=\{m'\}.
\]
We show that $f$ is bijective, and the classes
$\{m\}$, $m\in\XX_n$ have one or two elements.
Let $m=\left(\begin{smallmatrix} a & -b\\c & d
\end{smallmatrix}\right)\in \XX_n^<$, and let $T^{-k} m'=  \left(\begin{smallmatrix} A & -B\\C
& D
\end{smallmatrix}\right)$. Then 
\[
T^{-k} m'\in\XX_n \Longleftrightarrow k>1+\frac{b-a}{c+d}, \quad
\Big|k-\frac{a+b}{d-c}\Big|<1. 
\]
Since $c+d>|b-a|$, $a+b>d-c>0$, it is clear that there exist one or two values of $k$ for
which $T^{-k} m'\in\XX_n$, and therefore $f$ is bijective (since it is an involution).
Moreover,
\begin{equation}
|\{m'\}|= \begin{cases}
           1 & \text{ if } d-c|a+b \quad (k=\frac{a+b}{d-c}); \\
				1 & \text{ if } d-c\nmid a+b,\ b \ge a,\ \frac{a+b}{d-c}<2  \quad (k=2);\\
			 2 & \text{ otherwise }.
          \end{cases}
\end{equation}
Since $d-c|a+b \Longleftrightarrow A=B$, and $a=b\Longleftrightarrow D-C|A+B$, and 
$b>a,\frac{a+b}{d-c}<2\Longleftrightarrow B>A, \frac{A+B}{D-C}<2$, there are four
possibilities:
\begin{enumerate}
 \item $d-c\nmid a+b,\ a\ne b \Longleftrightarrow D-C\nmid A+B, A\ne B$. Then
$|\{m\}|=|\{m'\}|$. 
 \item $d-c| a+b,\ a= b \Longleftrightarrow D-C|A+B, A= B$. Then  $|\{m\}|=|\{m'\}|=1$. 
 \item $d-c\nmid a+b,\ a= b \Longleftrightarrow D-C|A+B, A\ne B$. Then
$$|\{m\}|=1,\ |\{m'\}|=\begin{cases} 1 & \text{ if } \frac{a+b}{d-c}<2\\ 
                       2& \text{ if } \frac{a+b}{d-c}>2\end{cases}.
$$
  \item $d-c| a+b,\ a \ne b \Longleftrightarrow D-C\nmid A+B, A=B$. Then
$$|\{m'\}|=1,\ |\{m\}|=\begin{cases} 1 & \text{ if } \frac{A+B}{D-C}<2\\ 
                       2& \text{ if } \frac{A+B}{D-C}>2\end{cases}.
$$
\end{enumerate}
Summing over two copies of $\XX_n^<$ divided into classes $\{m\}$ and respectively $\{m'\}$,
everything cancels by \eqref{P2} except for matrices $m'$ in case (3) with
$\frac{a+b}{d-c}>2$, and matrices $m$ in case (4) for which $\frac{A+B}{D-C}>2$. The two sums
are equal and we obtain 
\[
2\sum_{\gamma\in \XX_n^<} \gamma^\star=
2 \sum_{\gamma} \gamma^\star, \quad\text{over } \gamma=\left(\begin{smallmatrix} a & a\\-c & d
\end{smallmatrix}\right), d>|c|, d-c\nmid 2a, \frac{2a}{d-c}>2.
\] 
Writing $n=a r$, so that $c+d=r$, we obtain (after substituting $-c$ for $c$ ) 
\[
\sum_{\gamma\in \XX_n^<} \gamma^\star\equiv \sum_{n=ar} \sum_{\substack{0<r+2c<a\\r+2c\nmid
2a}}\left(\begin{smallmatrix} a & a\\c & c+r
\end{smallmatrix}\right).
\] 
By \eqref{5.221}, \eqref{5.23},  
the proof of \eqref{5.21} is finished once we show that the same sum, but with
the congruence condition replaced by $r+2c|2a$, yields 0 (mod $\cJ+\cJ^\vee$). By \eqref{P2},
\eqref{P4}, it is enough to show that 
\[
\left(\begin{smallmatrix} a & a\\c & c+r
\end{smallmatrix}\right)= \left(\begin{smallmatrix} a'-tc' & -a'+t(c'+r')\\-c' & c'+r'
\end{smallmatrix}\right)  \Longleftrightarrow r+2c|2a, \ r'+2c'|2a', 
\]
where $n=ar=a'r'$, $t\in \Z$, and $0<r+2c<a$, $0<r'+2c'<a'$. Indeed the equality of the
matrices implies $t=\frac{2a}{r+2c}=\frac{2a'}{r'+2c'}$, proving the equivalence above.   

\end{proof}

We end this section with a proof of the second theta series identity in the introduction, which is entirely similar
to that of Corollary \ref{cor5.6}. Let $\SS_n',\cdots, \XX_n'$ be  the sets defined like $\SS_n,\cdots, 
\XX_n$ but with the extra parity conditions $a\equiv d \pmod{2}$, $b\equiv c \pmod{2}$. 

\begin{prop} \label{prop5.7}a) If $n$ is odd then 
\begin{equation}\label{5.25}
\sum_{\gamma\in\XX_n'}\gamma^\star\equiv\sum_{n=ad}
2\sum_{-\frac{a}{2} <k<\frac{d-a}{2}} \left(\begin{smallmatrix} d & d\\k & a+k
\end{smallmatrix}\right)^\star.
\end{equation}

b) If $n$ is even then 
\begin{equation}\label{5.26}
\sum_{\gamma\in\XX_n'}\gamma^\star\equiv\sum_{\substack{n=ad\\2|a}}\sum_{\substack{-d-
\frac{a}{2} <k<d-\frac{a}{2}\\k\equiv d\!\!\!\! \pmod{2}}}
\left(\begin{smallmatrix} a+k & k\\d & d \end{smallmatrix}\right)^\star+
\sum_{\substack{n=ad\\2|a,2|d}}2\sum_{-\frac{a}{2} <k<\frac{d-a}{2}} \left(\begin{smallmatrix}
d & d\\k & a+k
\end{smallmatrix}\right)^\star.
\end{equation}
\end{prop}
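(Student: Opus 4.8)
The plan is to reprove Proposition \ref{prop_alg} essentially verbatim, now carrying the two congruences $a\equiv d$ and $b\equiv c \pmod 2$ through every step of the argument behind Corollary \ref{cor5.6}. The first task is to record which of the operations used there preserve membership in the primed sets. Conjugation by $\epsilon$ (the map $m\mapsto m'$) and the map $m\mapsto m'S$ both fix the two congruences, as do \eqref{P2} and \eqref{P3}; moreover the four matrices of \eqref{P11} share the same parity pattern, so $\SS_n',\TT_n',\UU_n',\VV_n',\XX_n'$ are mutually consistent and the decomposition $\XX_n'=\XX_n'^<\cup\XX_n'^=\cup\XX_n'^>$ goes through. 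Since $(m'S)^\star=m^\star$ by \eqref{P2}–\eqref{P3}, the bijection $m\mapsto m'S$ of $\XX_n'^<$ onto $\XX_n'^>$ gives $\sum_{\gamma\in\XX_n'^<}\gamma^\star=\sum_{\gamma\in\XX_n'^>}\gamma^\star$, hence $\sum_{\gamma\in\XX_n'}\gamma^\star= 2\sum_{\gamma\in\XX_n'^<}\gamma^\star+\sum_{\gamma\in\XX_n'^=}\gamma^\star$. The one operation that does \emph{not} respect the parities is left multiplication by $T$, which sends $(a,b,c,d)$ to $(a+c,b-d,c,d)$ and thus preserves both congruences only when $c\equiv d\equiv 0\pmod 2$, whereas $T^2$ always does; this single fact is what forces the divisibility conditions in \eqref{5.25}–\eqref{5.26}.

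Next I would evaluate the diagonal piece. As in \eqref{5.221} the elements of $\XX_n'^=$ are $\bigl(\begin{smallmatrix} a+k & k\\ d & d\end{smallmatrix}\bigr)$ with $n=ad$ and $-d-\tfrac a2<k<d-\tfrac a2$; imposing $a+k\equiv d$ and $-k\equiv d\pmod 2$ is equivalent to $2\mid a$ together with $k\equiv d\pmod 2$. This reproduces exactly the first sum of \eqref{5.26}, and since $2\mid a$ is impossible when $n$ is odd, the diagonal contributes nothing in case (a). It therefore remains to compute $2\sum_{\gamma\in\XX_n'^<}\gamma^\star$ and to identify it with the second sum in each case.

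For the off-diagonal sum I would run the same involution $f(\{m\})=\{m'\}$ on the set of $T$-orbits meeting $\XX_n'^<$, the crucial difference being that a representative $T^{-k}m'$ returning to the range must now also satisfy the two congruences. Since $m'$ already does, the computation $T^{-k}m'=\bigl(\begin{smallmatrix} a+kc & b+kd\\ -c & d\end{smallmatrix}\bigr)$ shows this holds for every $k$ when $c,d$ are both even and only for even $k$ otherwise. This is the main obstacle: the class sizes $|\{m\}|,|\{m'\}|$ tabulated in the four cases of Proposition \ref{prop_alg} must be recomputed over this restricted set of admissible shifts, since for $n$ odd one always has $c\not\equiv d$ (so only even $k$ are allowed) while for $n$ even the all-even stratum behaves like the unprimed case and the all-odd stratum again admits only even shifts. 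Carrying the bookkeeping through, the families surviving the cancellation are exactly those of \eqref{5.21} decorated with the induced parities, which yields the unrestricted second sum of \eqref{5.25} for $n$ odd and the $2\mid a,\ 2\mid d$ second sum of \eqref{5.26} for $n$ even.

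Finally, reassembling $2\sum_{\gamma\in\XX_n'^<}\gamma^\star$ with the diagonal term produces \eqref{5.25} and \eqref{5.26}. I expect the only genuinely delicate point to be this recounting of class sizes under the ``even-shift-only'' restriction, just as the four-case table was the heart of the proof of Proposition \ref{prop_alg}; every other ingredient is a parity-decorated transcription of that argument and of Corollary \ref{cor5.6}, and in particular the function $\NN$ is again unchanged by \eqref{P2}–\eqref{P4}, so the resulting identity will deliver the second theta series identity of the introduction by the same counting as in \eqref{5.222}.
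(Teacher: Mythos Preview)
Your proposal is correct and follows essentially the same route as the paper: decompose $\XX_n'$ into $\XX_n'^<\cup\XX_n'^=\cup\XX_n'^>$, note that the parity constraints force the shift $k$ in $T^{-k}m'$ to be even whenever $c\not\equiv d\pmod 2$, and redo the class-size bookkeeping of Proposition~\ref{prop_alg} under this restriction. The paper carries out part (a) explicitly---observing that for $n$ odd one always has $a\not\equiv b$ and $c\not\equiv d$, so only cases (1) and (4) of the four-case table survive, with $|\{m\}|=|\{m'\}|=1$ in case (1) and $|\{m'\}|=0$ in case (4) since the unique shift $k=\tfrac{a+b}{d-c}$ is odd there---and leaves (b) as an exercise, for which your stratification into the $c\equiv d\equiv 0$ and $c\equiv d\equiv 1$ layers is the natural approach.
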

\begin{proof}
The proof is very similar to that of Proposition \ref{prop_alg}, and we prove only part a)
leaving b) as an exercise for the reader. Assuming $n$ odd and referring to the proof of
Proposition \ref{prop_alg}, the parity condition implies that $a\not\equiv b$, $c\not\equiv d$
(mod 2), so that $\XX_n^{\prime=}$ is empty, and only cases (1) and (4)
can occur for $m\in \XX_n^{\prime<}$. The integer $k$ such that $T^{-k} m'\in \XX_n^{\prime<}$
must be even, so in case (1) we have $|\{m\}|=|\{m'\}|=1$, while in case (4) we have
$|\{m'\}|=0$, because $\frac{a+b}{d-c}$ is odd in that case. Denoting by $\XX_n^{\prime(4)}$
the set of $m=\left(\begin{smallmatrix} a & -b\\c & d
\end{smallmatrix}\right)\in\XX_n'$, with $d>|c|, d-c|a+b$, it follows that 
\[
\sum_{\gamma\in\XX_n^{'<}} \gamma^\star\equiv\sum_{m\in\XX_n^{\prime(4)}} m^\star
\]
Writing $d-c=r>0, a+b=lr$, from $a(d-c)+c(a+b)=n$ we have that $n=rs$ with $s\in\Z$ and
$a+lc=s$, so $T^l m= \left(\begin{smallmatrix} s & s\\c & c+r
\end{smallmatrix}\right)$. The condition $m\in \XX_n^{\prime<}$ implies that 
\[ l>1, l\text{ odd, and } \left|l-\frac{2s}{2c+r}\right|<1, 
\]
so there is a unique such $m$ for each $c>\frac{-r}{2}$ such that $\frac{2s}{2c+r}>2$, and
$2c+r\nmid 2s$. Consequently:
\[
\sum_{\gamma\in\XX_n^{'<}}\gamma^\star\equiv\sum_{n=rs}\sum_{\substack{\frac{-r}{2}<c<\frac{s-r
} { 2 } \\2c+r\nmid 2s}}\left(\begin{smallmatrix} s & s\\c & c+r
\end{smallmatrix}\right)^\star .
\]
The proof is finished by observing that the divisibility condition can be removed, exactly as
in the previous case. 
\end{proof}

\begin{corollary}\label{cor5.8} We have the identity
\begin{equation}\label{5.27}
\sideset{}{'}\sum_{\substack{x\ge|y|, z\ge
|t|\\x>|t|,z>|y|}}\hspace{-8pt}q^{x^2+z^2-y^2-t^2}=\tE_2(q)-2\tE_2(q^2)+4\tE_2(q^4)-2\sum_{n>0}
\sigma_{\min}^\ev(n)\, q^n+\sum_{n>0} q^{n^2},
\end{equation}
where $\sigma_{\min}^\ev (n)=\sum_{n=ad, 2|(d-a)} \min(a,d)$.
\end{corollary}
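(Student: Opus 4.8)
The plan is to run the proof of Corollary~\ref{cor5.6} essentially verbatim, with Proposition~\ref{prop5.7} replacing Proposition~\ref{prop_alg} and with the same sign-counting functional $\NN$. As there, the identity reduces to computing $\NN(\XX_n')$ in two ways: first I show $\NN(\XX_n')$ is the coefficient of $q^n$ on the left of \eqref{5.27}; then, since passing from $\sum_{\gamma\in\XX_n'}\gamma^\star$ to the right-hand sides of \eqref{5.25}--\eqref{5.26} uses only \eqref{P2}--\eqref{P4}, which leave $\NN$ unchanged, I read off $\NN(\XX_n')$ from those right-hand sides. Equating the two expressions gives \eqref{5.27}.

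For the first computation I introduce the change of variables $a=x-y$, $d=x+y$, $b=z-t$, $c=z+t$, under which $\det\left(\begin{smallmatrix}a&-b\\c&d\end{smallmatrix}\right)=ad+bc=x^2+z^2-y^2-t^2$. The defining parity conditions $a\equiv d$, $b\equiv c\ (\mathrm{mod}\ 2)$ of the primed sets are exactly integrality of $x,y,z,t$. The nonnegativity $a,d\ge0$, $b,c\ge0$ becomes $x\ge|y|$, $z\ge|t|$, and the two strict defining inequalities of $\XX_n$ combine to give $x>|t|$, $z>|y|$; the boundary $abcd=0$, counted with weight $1/2$, is precisely the equality locus $x=|y|$ or $z=|t|$, matching the $\sum'$ convention. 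The case analysis (1)--(3) of Corollary~\ref{cor5.6} applies unchanged (each $A_\gamma$ is stable under $a\leftrightarrow d$, $b\leftrightarrow c$, so the parity is respected), giving the analogue of \eqref{5.222}. Hence $\NN(\XX_n')$ equals the coefficient of $q^n$ on the left of \eqref{5.27}.

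For the second computation I count the base matrices on the right of \eqref{5.25}--\eqref{5.26} weighted by $\sgn(c_\gamma d_\gamma)$, splitting into $n$ odd, $n\equiv2\ (\mathrm{mod}\ 4)$, and $4\mid n$. When $n$ is odd, $\XX_n^{\prime=}=\emptyset$ and only the second sum of \eqref{5.25} survives; it coincides with the second sum of \eqref{5.21}, so its count is the $\tau_\ev(n)=0$ specialization of Corollary~\ref{cor5.6}, namely $\sigma(n)-2\sigma_{\min}(n)+\delta(\sqrt n)$, which is the coefficient in \eqref{5.27} since $\sigma_{\min}^\ev(n)=\sigma_{\min}(n)$ for odd $n$. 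When $n$ is even I use \eqref{5.26}: the first sum has constant lower row $(d,d)$, hence sign $+1$, and counting the $k\equiv d\ (\mathrm{mod}\ 2)$ in its range yields $\sigma(n/2)-[4\mid n]\,\tau(n/4)$; the second sum has lower row $(k,a+k)$ with $a+k>0$, so its sign is $\sgn(k)$, which I sum over $-\tfrac a2<k<\tfrac{d-a}2$.

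The main obstacle is this even-$n$ bookkeeping. The one genuine subtlety is that when $d\le a$ the interval $-\tfrac a2<k<\tfrac{d-a}2$ lies entirely in $k<0$, so the naive split at $k=0$ overcounts and must be corrected. Carrying this out, the second sum (for $4\mid n$) contributes $2\sigma(n/4)-4\sigma_{\min}(n/4)+\tau(n/4)+\delta(\sqrt n)$; the $\tau(n/4)$ cancels against the first sum, leaving $\sigma(n/2)+2\sigma(n/4)-4\sigma_{\min}(n/4)+\delta(\sqrt n)$. Matching this with the coefficient of \eqref{5.27} is then pure arithmetic: for $4\mid n$ one uses $\sigma_{\min}^\ev(n)=2\sigma_{\min}(n/4)$ together with $\sigma(n)=3\sigma(n/2)-2\sigma(n/4)$ (the local factor of $\sigma$ at $2$), and for $n\equiv2\ (\mathrm{mod}\ 4)$ one uses $\sigma_{\min}^\ev(n)=0$ and $\sigma(n)=3\sigma(n/2)$; these rearrangements are what assemble the combination $\tE_2(q)-2\tE_2(q^2)+4\tE_2(q^4)$. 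Once the three residue classes are verified, the two computations of $\NN(\XX_n')$ agree and \eqref{5.27} follows.
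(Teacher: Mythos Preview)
Your proof is correct and follows the same strategy as the paper's: identify $\NN(\XX_n')$ with the coefficient of $q^n$ on the left via the substitution $a=x\mp y$, $d=x\pm y$, $b=z\mp t$, $c=z\pm t$, then evaluate it from the right-hand sides of \eqref{5.25}--\eqref{5.26} using that \eqref{P2}--\eqref{P4} preserve $\NN$. The only differences are organizational: the paper treats all even $n$ at once (with the convention that arithmetic functions vanish on non-integers) and records the second sum as $2\sigma(n/4)+\tau(n/4)-2\sigma_{\min}^\ev(n)+\delta(\sqrt{n}/2)$, whereas you split into $n\equiv 2\pmod 4$ and $4\mid n$ and write $2\sigma(n/4)+\tau(n/4)-4\sigma_{\min}(n/4)+\delta(\sqrt n)$; these agree since $\sigma_{\min}^\ev(n)=2\sigma_{\min}(n/4)$ and $\delta(\sqrt n/2)=\delta(\sqrt n)$ when $4\mid n$.
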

\begin{proof}
Arguing as before, we have that $\NN(\XX_n')$ is given by \eqref{5.222} with $\XX_n$ replaced
by $\XX_n'$. Making a substitution $a=x+y,d=x-y, b=z+t,c=z-t$, the conditions $a,b,c,d\ge 0$
become $x\ge|y|$, $z\ge |t|$, while $a+b>|c-d|, c+d>|a-b|$ become $x>|t|, z> |y|$. Therefore
$\NN(\XX_n')$ is the coefficient of $q^n$ in the LHS of \eqref{5.27}.

As before, we can count $\NN(\XX_n')$ from Proposition \ref{prop5.7}. When $n$ is odd, counting
matrices in the RHS of \eqref{5.25}, with sign depending on the sign of $k$, gives 
\[\NN(\XX_n')=\sigma(n)-2\sigma_{\min}(n)+\delta(\sqrt{n}).
\] 
If $n$ is even, counting matrices in the RHS of \eqref{5.26} similarly yields (the first term comes
from the first sum, and the second from the second sum)
\[
\NN(\XX_n')=\big[
\sigma\big(\textstyle\frac{n}{2}\big)-\tau\big(\frac{n}{4}\big)\big]+
\big[2\sigma\big(\textstyle\frac{n}{4}\big)+\tau\big(\frac{n}{4}\big)-2\sigma_{\min}^\ev(n)+
\delta\big(\frac{\sqrt{n}}{2}\big) \big],
\] 
where $\tau(n)$ is the number of divisors of $n$ and we adopt the convention that an
arithmetic function is zero on non-integers. This is exactly the coefficient of $q^n$ in the
RHS of \eqref{5.27}
\end{proof}
The group ring relations in Propositions \ref{prop3}, \ref{prop_alg}, \ref{prop5.7}, as well as the
formulas in Corollaries \ref{cor5.6} and \ref{cor5.8}, have been checked numerically for  $n\le 100$
using MAGMA \cite{Mgm}.

\section*{Acknowledgments} Part of this work was completed at the Max Planck Institute in Bonn, which
provided financial support and a great working environment. We would like to thank Don Zagier for inspiring
conversations. The first author was partially supported by the CNCSIS grant PN-II-ID-PCE-2012-4-0376. 
The second author was partially supported by the European Community grant PIRG05-GA-2009-248569 and by the CNCS -
UEFISCDI grant PN-II-RU-TE-2011-3-0259.

\end{document}